\newtheorem{theorem}{Theorem}[section]
\newtheorem{lemma}[theorem]{Lemma}
\theoremstyle{definition}
\newtheorem{definition}[theorem]{Definition}
\newtheorem*{question}{Question}
\numberwithin{equation}{section}
\begin{document}

\title{On the distortion of knots on embedded surfaces}

\author{John Pardon}

\date{18 December 2010; Revised 28 April 2011}

\maketitle

\begin{abstract}
Our main result is a nontrivial lower bound for the distortion of some specific knots.  In particular, we show that the distortion 
of the torus knot $T_{p,q}$ satisfies $\delta(T_{p,q})\geq\frac 1{160}\min(p,q)$.  This answers a 1983 question of Gromov.
\end{abstract}

\section{Introduction}

If $\gamma$ is a rectifiable curve in $\mathbb R^3$, then its \emph{distortion} is defined to be the quantity:
\begin{equation}\label{distdef}
\delta(\gamma)=\sup_{p,q\in\gamma}\frac{d_{\gamma}(p,q)}{d_{\mathbb R^3}(p,q)}\geq 1
\end{equation}
where $d_\gamma$ denotes the arclength along $\gamma$ and $d_{\mathbb R^3}$ denotes the Euclidean distance in 
$\mathbb R^3$.  In 1983, Gromov asked the following question:

\begin{question}[{quoted directly from \cite[p114]{gromov2}}]
Does \emph{every} isotopy class of knots in $\mathbb R^3$ have a representative in $\mathbb R^3$ with distortion $<100$?  
Is it so for all torus knots $T_{p,q}$ for $p,q\to\infty$?
\end{question}

In this paper we show that this is not the case.  For a knot $K$, let $\delta(K)$ denote the infimum of $\delta(\gamma)$ over all 
rectifiable curves $\gamma$ in the isotopy class $K$.

\begin{theorem}\label{torusthm}
Let $T_{p,q}$ denote the $(p,q)$-torus knot.  Then $\delta(T_{p,q})\geq\frac 1{160}\min(p,q)$.
\end{theorem}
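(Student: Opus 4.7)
The plan is to combine one general distortion estimate with topological input from the cabling structure of $T_{p,q}$. The general estimate is elementary: for any rectifiable curve $\gamma$ of distortion $\delta$ and any open ball $B_r\subset\mathbb R^3$ of radius $r$, each connected component of $\gamma\cap B_r$ has arclength at most $2r\delta$, since its two endpoints lie on $\partial B_r$ and are therefore at Euclidean distance at most $2r$. This is the only general tool from the definition of distortion; everything else must come from the topology of the knot class.

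The topological input I have in mind exploits the fact that $T_{p,q}$ is a cable knot. For any rectifiable representative $\gamma$ of $T_{p,q}$, there is (after an ambient isotopy) an embedded torus in $\mathbb R^3$ carrying $\gamma$ as the $(p,q)$-curve, and the cores $C_1, C_2$ of the two solid tori bounded by this torus form a Hopf link in $\mathbb R^3\setminus\gamma$, with linking numbers $\pm p$ and $\pm q$ with $\gamma$, respectively. In particular, any embedded disk spanning $C_i$ meets $\gamma$ transversely in at least $|\mathrm{lk}(\gamma,C_i)|$ points.

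Assuming without loss of generality that $p\leq q$, suppose we can choose $C_1$ of diameter $d$, with a spanning disk $D_1$ also of diameter comparable to $d$, meeting $\gamma$ in intersection points $x_1,\ldots,x_p$ all lying in a Euclidean ball of radius $d$. Along $\gamma$ these points divide the circle into at least $p$ arcs, so by pigeonhole some such arc has arclength at most $L/p$, where $L=\mathrm{length}(\gamma)$. Its two endpoints lie at Euclidean distance at most $2d$, so distortion gives $L/p \leq 2d\delta$; equivalently, $\delta\geq L/(2pd)$.

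The main obstacle is to show that $C_1$ (and hence $d$) can be taken as small as $L/p^2$ up to a universal constant, producing the \emph{linear} dependence $\delta\geq p/C$. This is where the full topological input must come in: not just the linking number with a single core, but both linking numbers $p,q$ and the Hopf-link constraint between $C_1$ and $C_2$. I expect one uses a minimax or averaging argument over Hopf-linked pairs $(C_1,C_2)\subset\mathbb R^3\setminus\gamma$ realizing the required pair of linking numbers with $\gamma$, extracting a pair whose diameters are jointly small — thin Hopf-linked tori with the prescribed framings should be forced on any embedding. Granting this small-disk lemma, combining with $\delta\geq L/(2pd)$ and tracking the constants would yield $\delta\geq\min(p,q)/160$.
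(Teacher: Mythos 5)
There is a genuine gap, and the approach as outlined cannot be completed in the form you propose. The decisive missing piece is the ``small-disk lemma'' that you explicitly defer: the existence of a companion core $C_1$ with a spanning disk of diameter $O(L/p^2)$ meeting $\gamma$ in a controlled set of points. This is not a technical point to be granted --- it is essentially the entire content of the theorem. The carrying torus, its cores, and their spanning disks are determined by $\gamma$ only up to isotopy; for a badly distorted embedding every representative of $C_1$ and every spanning disk may itself be huge and contorted, and the satellite structure gives no metric control whatsoever over these objects. Nor is there a natural family or measure of Hopf-linked pairs over which to run a minimax or averaging argument. The paper's proof inverts your logic: it starts from metrically nice surfaces supplied by the ambient geometry (boundaries of boxes and flat planar disks), uses integral geometry plus the distortion hypothesis to select ones meeting $\gamma$ \emph{fewer} than $I(F,\beta)=\min(p,q)$ times, and then invokes the topology of the carrying torus $F$ (Lemmas~\ref{mustbeinessential} and~\ref{spherefamily}) to show that cutting along such a surface traps the genus-one component of $F$ inside a strictly smaller box; iterating yields arbitrarily small such boxes, a contradiction. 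So the intersection number $\min(p,q)$ enters as an \emph{obstruction} ruling out essential intersections of low-complexity surfaces with $F$, not as a device for manufacturing small disks with many intersections.

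Separately, your pigeonhole step is logically reversed. An arc of $\gamma$ of length \emph{at most} $L/p$ with endpoints at Euclidean distance at most $2d$ yields only $d_\gamma(x,y)\le\min(L/p,\,2d\delta)$, which gives no lower bound on $\delta$. To get one you need two intersection points that are \emph{far apart along} $\gamma$ yet close in space, and for that you need (i) an upper bound on the total number $N$ of points of $D_1\cap\gamma$ (the linking number gives only $N\ge p$, and the natural estimate $\delta\ge L/(2Nd)$ degenerates when $N\gg p$), and (ii) a guarantee that these points are not all clustered on one short sub-arc of $\gamma$, in which case every complementary arc but one is short and the remaining long arc absorbs nearly all of $L$, so no useful pair exists. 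Neither is supplied by the topology alone; obtaining an upper bound on the number of intersections is precisely the ``choose a good level'' step the paper carries out by integrating $\#(\beta^\ast\cap\partial\operatorname{Box}(r))$ over $r$ and $\#(\beta^\ast\cap\{z=s\})$ over $s$.
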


\begin{theorem}\label{cablingthm}
Let $K$ be a nontrivial tame knot, and let $K_{p,q}$ denote the $(p,q)$-cabling of $K$, where $p$ is 
the longitudinal coefficient.  Then $\delta(K_{p,q})\geq\frac 1{160}p$.
\end{theorem}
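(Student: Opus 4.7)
The plan is to prove Theorem~\ref{cablingthm} by the same method that should yield Theorem~\ref{torusthm}, recognizing that both the torus knot $T_{p,q}$ and the cable $K_{p,q}$ wind $p$ times longitudinally inside an embedded solid torus whose boundary is essential in the knot complement.  For $T_{p,q}$ this essentiality comes from the standard unknotted torus; for $K_{p,q}$ it comes from nontriviality of the companion $K$, which guarantees that $T = \partial N(K)$ is incompressible in $S^3 \setminus K_{p,q}$, so that the corresponding JSJ piece survives in any representative.

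Given a rectifiable representative $\gamma$ of $K_{p,q}$ with distortion $\delta$, I would first use the topology of the knot complement to locate a torus $T$ in the isotopy class of $\partial N(K)$ bounding a solid torus $V$ containing the satellite strands of $\gamma$.  The longitudinal coefficient $p$ then forces $\gamma$ to meet any meridional disk $D \subset V$ in at least $p$ points.  The heart of the argument should be to show that these $p$ strands in $V$ must be genuinely geometrically spread out, not merely topologically distinct, and to extract from this a pair of points on $\gamma$ whose ratio of arclength separation to Euclidean distance is at least $c \cdot p$ for some explicit universal constant $c$.  Both Theorems~\ref{torusthm} and~\ref{cablingthm} would then follow by combining this estimate with the appropriate essentiality input for the torus $T$.

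The main obstacle will be making this geometric spreading quantitative.  A naive pigeonhole on $\gamma \cap D$ gives pairs within $\operatorname{diam}(D)$ in $\mathbb{R}^3$ with arclength separation of order $\operatorname{length}(\gamma)$, which only yields $\delta \gtrsim \operatorname{length}(\gamma)/\operatorname{diam}(D)$ without the needed factor of $p$.  The missing $p$ must come from a matching length lower bound $\operatorname{length}(\gamma) \gtrsim p \cdot \operatorname{diam}(D)$, which should itself follow from the topological essentiality of $T$ (via nontriviality of $K$) together with a Morse-theoretic slicing of $V$ by parallel meridional disks and a bookkeeping of the topology of $\gamma$ at each level.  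Making all constants explicit enough to produce $1/160$ will be the principal technical difficulty; a secondary obstacle, the lack of regularity of $\gamma$, is a standard matter handled by piecewise-linear approximation without loss in distortion.
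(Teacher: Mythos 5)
The paper's proof of this theorem is a short deduction from Theorem \ref{surfacethm}: take $F=\partial N(K)$, the boundary torus of a tubular neighborhood of the companion knot, with $\beta=K_{p,q}$ realized as a $(p,q)$-curve on $F$. A nontrivial simple loop $\alpha$ on $F$ bounding an embedded disk whose interior misses $F$ must bound either in $N(K)$ or in its exterior; the latter is impossible because $K$ is nontrivial (the exterior has incompressible boundary by the Loop Theorem), so $\alpha$ is a meridian and $i(\alpha,\beta)=p$. Hence $I(F,\beta)=p$, and Theorem \ref{surfacethm} gives $\delta(K_{p,q})\geq\frac 1{160}p$. You instead set out to prove the bound directly from scratch; your outline correctly identifies the relevant topology (incompressibility of the companion torus, the $p$ intersections with a meridian disk), but it stops exactly where the real difficulty begins.

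Concretely, there are two gaps. First, the step you call ``the heart of the argument'' --- that the $p$ strands through a meridian disk $D$ are \emph{geometrically} spread out --- is not an argument but a restatement of the theorem; nothing in your sketch produces a pair of points close in $\mathbb R^3$ and far apart along $\gamma$. Your proposed fix, a length bound $\operatorname{Length}(\gamma)\gtrsim p\cdot\operatorname{diam}(D)$ followed by pigeonhole, does not close it: $p$ points on a closed curve can all be clustered within a short arc (with one long complementary arc), so no pair of them need have arclength separation comparable to $\operatorname{Length}(\gamma)$, let alone to $p\cdot\operatorname{diam}(D)$. Second, and more fundamentally, the torus $T$ and its meridian disks exist only up to ambient homeomorphism; since finite distortion gives no regularity control (the functional is not coercive), $T$ can be geometrically wild and $\operatorname{diam}(D)$ is unrelated to any scale you can control. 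The paper's machinery --- the family-of-spheres Lemma \ref{spherefamily}, the obstruction Lemma \ref{mustbeinessential}, and the iterated double-bubble and integral-geometry argument producing arbitrarily small regions meeting the surface in a genus-$g$ component --- exists precisely to circumvent this, and your plan would have to reproduce essentially all of it. Given that the paper states Theorem \ref{cablingthm} as a consequence of Theorem \ref{surfacethm}, the efficient proof is the reduction above.
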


These are both consequences of the following more general theorem which deals with knots lying on any embedded surface.

\begin{theorem}\label{surfacethm}
Let $F\subseteq\mathbb R^3$ be a PL embedded closed surface of genus $g\geq 1$.  Let $\mathcal S$ denote the set of nontrivial 
isotopy classes of simple loops on $F$, and let $i:\mathcal S\times\mathcal S\to\mathbb Z_{\geq 0}$ denote the 
minimum geometric intersection number.  Let $\beta\in\mathcal S$, and let $K_\beta$ denote the corresponding 
knot in $\mathbb R^3$.  Then we have:
\begin{equation}
\delta(K_\beta)\geq\frac 1{160}I(F,\beta)
\end{equation}
where we define $I(F,\beta):=\min_{\alpha\in U}i(\alpha,\beta)$, where $U$ is the set of 
all $\alpha\in\mathcal S$ which bound a PL embedded disk whose interior is disjoint from $F$.
\end{theorem}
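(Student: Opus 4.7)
The plan is to prove the contrapositive: given a PL rectifiable curve $\gamma$ in the isotopy class of $K_\beta$ with $\delta(\gamma)=\delta$, I will produce $\alpha\in U$ with $i(\alpha,\beta)\leq 160\delta$, which yields $I(F,\beta)\leq 160\,\delta(K_\beta)$.

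First I normalize by an ambient isotopy. Since distortion depends only on the embedded curve $\gamma\subset\mathbb{R}^3$ while $I(F,\beta)$ is a topological invariant of the pair $(F,\beta)$, I apply an ambient isotopy of $\mathbb{R}^3$ carrying the standard embedding $\beta\subset F$ to $\gamma$. This transports $F$ to a PL closed surface $F'\subset\mathbb{R}^3$ containing $\gamma$ in the isotopy class of $\beta$ on $F'$, and preserves both $U$ and $I(F,\beta)$, so I work with the pair $(F',\gamma)$.

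The key geometric step is to find a round sphere $S=\partial B_r(p)$, for a well-chosen $p\in\gamma$, whose intersection with $\gamma$ is small relative to $\delta$. The coarea formula applied to the $1$-Lipschitz function $x\mapsto d(x,p)$ restricted to $\gamma$ gives
\begin{equation*}
\int_0^\infty |\gamma\cap\partial B_r(p)|\,dr\leq\mathrm{length}(\gamma),
\end{equation*}
and combining with the distortion bound $\mathrm{length}(\gamma)\leq 2\delta\cdot\mathrm{diam}(\gamma)$ and averaging over a suitable range of radii yields a specific $r$ with $|\gamma\cap S|\leq C\delta$ for some modest constant $C$. The choice of $p\in\gamma\subset F'$ must be coordinated with $r$ to ensure that $S$ also interacts nontrivially with $F'$.

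Next I apply the classical innermost disk argument from $3$-manifold topology to the finite disjoint union of simple closed curves in $S\cap F'$. An innermost curve on $S$ bounds a disk $D\subset S$ with $\partial D\subset F'$ and $\mathrm{int}(D)\cap F'=\emptyset$. If $\partial D$ is nontrivial on $F'$, then $\alpha=[\partial D]\in U$; and since $\partial D\subset S$ with $\mathrm{int}(D)$ disjoint from $F'\supset\gamma$, every intersection point of $D$ with $\gamma$ lies in $\partial D\cap\gamma\subseteq S\cap\gamma$, so $i(\alpha,\beta)\leq|S\cap\gamma|\leq C\delta$. If instead $\partial D$ is trivial on $F'$, it bounds a disk $D'\subset F'$; by Alexander's theorem $D\cup D'$ bounds a $3$-ball, across which I isotope $F'$ to reduce $|S\cap F'|$, then iterate.

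The main obstacle is coordinating the choice of the scale $r$ (where the coarea estimate gives few intersections) with the topological requirement that the innermost-disk process terminates with a \emph{nontrivial} compressing curve, rather than exhausting $S\cap F'$ entirely. Ruling out the degenerate termination uses the fact that $p\in F'$, which forces $S$ to continue meeting $F'$ for the relevant radii; and controlling all the constants through the multi-scale averaging and through the iterated surgeries is where the explicit constant $\tfrac{1}{160}$ emerges. Achieving a tight constant likely requires refining the coarea bound with an argument that treats annuli of comparable radii together, so that the scale $r$ can be chosen to survive the simplification of $S\cap F'$.
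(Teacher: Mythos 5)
Your outline contains the right ingredients at the level of individual tools (coarea/averaging to find a sphere meeting $\gamma$ few times; the innermost-disk argument to upgrade an essential curve of $S\cap F'$ to an element of $U$), but the overall strategy has a gap that is not a matter of "coordinating constants": a single well-chosen sphere simply cannot be forced to meet $F'$ \emph{essentially}. The fact that $p\in F'$ only guarantees that $S=\partial B_r(p)$ meets $F'$ for the relevant radii; it is entirely consistent that for every such radius all curves of $S\cap F'$ are inessential in $F'$ (a small sphere about $p$ meets $F'$ in a single trivial circle, and nothing in the coarea argument pushes you toward a radius where an essential intersection occurs). Your fallback --- compress away trivial curves by isotoping $F'$ across the ball given by Alexander's theorem and iterate --- can terminate with $S\cap F'=\varnothing$ and no element of $U$ produced; moreover, isotoping $F'$ moves $\gamma$ whenever $\gamma$ crosses the subdisk $D'\subset F'$, destroying the distortion hypothesis you are trying to exploit. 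The paper runs the logic in the opposite direction: it \emph{assumes} every relevant sphere meets $F$ inessentially (which is exactly what $|S\cap\beta|<I(F,\beta)$ forces, via the innermost-disk lemma), and derives a contradiction by infinite descent. The engine of that descent is what your proposal is missing: (i) a lemma tracking the genus-$g$ component of $F\cap\operatorname{int}(S_t)$ through a one-parameter family of spheres undergoing surgeries, using that inessential intersections cannot change $\dim\operatorname{im}(H_1(F\cap\operatorname{int}(S_t))\to H_1(F))$ by more than $1$ while it must equal $0$ or $2g$; and (ii) a "double bubble" step --- cutting a region whose intersection with $F$ has a genus-$g$ component by a disk that also meets $\beta$ few times --- which produces a strictly smaller region with the same property, so that iteration yields arbitrarily small genus-$g$ pieces of $F$, an absurdity. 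Without some replacement for this multi-scale mechanism, the degenerate termination you flag as "the main obstacle" is not an obstacle to be engineered around but a fatal one.

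Two secondary issues. First, you restrict to \emph{PL} representatives $\gamma$; since it is open whether $\delta_{\operatorname{PL}}(K)=\delta(K)$, this proves a bound only on $\delta_{\operatorname{PL}}(K_\beta)$, not on $\delta(K_\beta)$. The theorem must handle arbitrary rectifiable representatives, which is why the paper invokes the Moise--Bing theorem to make the ambient homeomorphism PL away from $\beta^\ast$ and then needs a separate straightening lemma near the (non-PL) intersections with $\beta^\ast$. Second, "an ambient isotopy carrying $\beta\subset F$ to $\gamma$" needs justification for a general rectifiable $\gamma$ in the isotopy class; the paper instead starts from the ambient homeomorphism furnished by the definition of knot isotopy and transports $F$ forward.
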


Some calculation shows that $\delta(T_{p,q})$ is bounded above by a constant times:
\begin{equation}\label{distexplicit}
\min\left\{\left|xp\right|+\left|yq\right|\Bigm|xp+yq=1,(x,y)\in\mathbb Z^2\right\}
\end{equation}
(indeed, the standard embedding on a torus of revolution of the correct dimensions achieves this).  Thus for some 
families of torus knots (e.g.\ the $(p,kp+1)$-torus knots for fixed $k$), 
the bound in Theorem \ref{torusthm} is sharp up to a constant factor.  In particular, Theorem \ref{surfacethm} gives the 
best possible asymptotics in terms of $I(F,\beta)$, up to a constant factor.  
However, to the author's knowledge, there are no known embeddings of the torus knots which 
achieve distortion smaller than a constant times (\ref{distexplicit}), which is clearly at least $\max(p,q)$.  
Thus one would like to improve Theorem \ref{torusthm} substantially.  
For instance, surely $\lim_{p\to\infty}\delta(T_{2,p})=\infty$, though at present it seems that a significant new 
idea would be needed before our methods would yield such a result.

Despite the simplicity of (\ref{distdef}), very little is known about the distortion of knots, especially if one is interested 
in lower bounds.  Gromov showed that for any simple closed curve $\gamma$, we have $\delta(\gamma)\geq\frac 12\pi$, with equality 
if and only if $\gamma$ is a circle, thus determining $\delta(\operatorname{unknot})$.  Denne and Sullivan \cite{dennesullivan} 
have shown that for any nontrivial tame knot $K$, we have $\delta(K)\geq\frac 53\pi$.

The principal difficulty in dealing with the distortion is that it is not \emph{coercive}, that is, $\delta(\gamma)<\infty$ does 
not imply nice regularity properties of $\gamma$.  As a result, even if $\gamma$ has finite distortion, 
it may still be knotted at arbitrarily small length scales.  The ``remarkable simple closed curve'' of Fox \cite{foxwild} 
is an illuminating example of a wild knot with finite distortion, and it is easy to observe (see, e.g., Gromov \cite[p308]{gromov1}) 
that there is some finite threshold of the distortion under which there are infinitely many tame knots (even prime ones).  It is also worth remarking that if we let 
$\delta_{\operatorname{PL}}(K)$ equal the infimal distortion over all polygonal representatives of $K$, 
then it is an open question whether or not $\delta_{\operatorname{PL}}(K)=\delta(K)$ for all tame knots $K$.  Because 
of the possibility of pathological embeddings with small distortion, it is also an open problem to establish the existence 
of minimizers of $\delta$ in any nontrivial knot class.  It is not clear how much information can be obtained from 
calculus of variations (e.g.\ as applied by Mullikin \cite{mullikin}), the main 
difficulty being that (\ref{distdef}) is a supremum, not an average.

It is perhaps relevant to contrast the situation when an ``energy functional'' is coercive.  For example, suppose one can derive an 
\emph{a priori} $C^{1,\alpha}$ estimate in terms of the value of an energy functional on a curve.  Since $C^{1,\alpha}$ curves are tame, 
we conclude that finite energy implies tameness.  If in addition the energy functional blows up for non-embedded curves (and depends continuously on the input curve), a straightforward 
compactness argument (Arzel\`a-Ascoli) shows the existence of energy minimizers in any tame knot class, as well as the finiteness of the number of knot classes 
with energy less than any finite threshold.  As a typical example of a functional to which this applies, we cite the knot energies $e_j^p$ of 
O'Hara \cite{ohara1,ohara2} in the range $jp>2$.  Freedman, He, and Wang \cite{freedman} have studied $e_2^1$ (the ``M\"obius energy'') in detail, where the analysis 
is much more delicate since this is the critical case $jp=2$, and only a weaker form of coercivity holds.  It is natural to interpret the limit $e_0^\infty$ 
as the logarithm of the distortion; however, as noted above, the distortion is not coercive, and thus the basic 
methods used to deal with $e_j^p$ apparently do not apply.

\subsection{Acknowledgements}

Most thanks is owed to David Gabai, whose suggestion has led to a significantly improved estimate over the author's original $\Omega([I(F,\beta)/\log I(F,\beta)]^{1/3})$, as 
well as to a much simplified proof.  The author is grateful to David Gabai and Conan Wu for their interest in the result and their discussions 
about it with the author.  The author also thanks Greg Kuperberg and John Sullivan for many useful comments on an earlier version of this paper.  
Detailed and insightful comments from the referee have also been very helpful in improving the clarity of this paper.  
The author first found out about this problem while reading a problem list compiled by Mohammad Ghomi.

\section{Proofs}

We shall prove Theorem \ref{surfacethm} by contradiction.  Specifically, we shall show that if the desired inequality is violated, 
then there exist arbitrarily small regions of $\mathbb R^3$ whose intersection with $F$ has a connected component of genus $g$ (recall $g$ is the genus of $F$).  
An outline of the proof is as follows.

We rely on two purely topological lemmas.  Perhaps the key to the proof is Lemma \ref{spherefamily}, which states, roughly 
speaking, that if one has a suitably generic family of embedded spheres $\{S_t\}_{t\in[0,1]}$, and each of these spheres has \emph{inessential} intersection 
with $F$, then the component of $F\setminus S_t$ with genus $g$ must stay in a single connected component of $\mathbb R^3\setminus S_t$ as 
$t$ varies.  A second essential fact is encapsulated in Lemma \ref{mustbeinessential}, which says that if a sphere intersects $\beta$ fewer 
than $I(F,\beta)$ times, then its intersection with $F$ must be inessential.

These two purely topological facts are combined with geometric information (i.e.\ the distortion) as follows.  Suppose that the portion 
of $F$ contained in some region of $\mathbb R^3$, say the ball $B(\mathbf 0,1)$, has a connected component of genus $g$.  
Counting the number of intersections between $\beta$ and the sphere $S(\mathbf 0,r)$ gives a function of $r$, and the integral of this function 
over $r\in[1,1+\epsilon]$ is less than or equal to the length of $\beta$ contained in $B(\mathbf 0,1+\epsilon)$.  
This length is bounded in terms of the distortion $\delta(\beta)$, so we find that there exists some $r_1\in(1,1+\epsilon)$ such 
that $\#(\beta\cap S(\mathbf 0,r_1))\leq\epsilon^{-1}\delta(\beta)$ (we'll ignore 
constants in this paragraph).  By a similar argument, we find $s_1\in(-\epsilon,\epsilon)$ such that $\#(\beta\cap\{z=s_1\}\cap B(\mathbf 0,r_1))\leq\epsilon^{-1}\delta(\beta)$.  
The crucial step is as follows: we consider a family of spheres $\{S_t\}_{t\in[0,1]}$, defined by starting with $S(\mathbf 0,r_1)$ and performing a $2$-surgery 
along the disk $\{z=s_1\}\cap B(\mathbf 0,r_1)$.  By construction, $\#(S_t\cap\beta)\leq\epsilon^{-1}\delta(\beta)$ for all $t\in[0,1]$.  
If $\epsilon^{-1}\delta(\beta)<I(F,\beta)$, then by Lemma \ref{mustbeinessential}, $S_t$ has only inessential intersections 
with $F$ for all $t\in[0,1]$.  Thus by Lemma \ref{spherefamily}, the portion of $F$ contained in one of the two half-spheres which comprise $S_1$ must have a 
connected component of genus $g$.  In summary, we have started with a region in $\mathbb R^3$ whose intersection with $F$ has a connected 
component of genus $g$, and under the assumption $\delta(\beta)<\epsilon\cdot I(F,\beta)$, we have produced a smaller region with the same property.  
Iterating this construction yields arbitrarily small such regions, a contradiction.  Thus $\delta(\beta)\geq\epsilon\cdot I(F,\beta)$ and Theorem \ref{surfacethm} is proved.

The topological portions of the proof appear first, in Sections \ref{spheresurface} and \ref{essobstr}, which contain Lemmas \ref{spherefamily} and \ref{mustbeinessential} 
respectively.  In Section \ref{dbsec}, we formalize the sphere cutting 
procedure from the previous paragraph in terms of ``double bubbles'' (this is still purely topological).  Then in 
Section \ref{geompart}, we use integral geometry as above to relate the distortion with intersection counts, and thus finish the proof of Theorem \ref{surfacethm}.

We shall do all the necessary topology in the piecewise-linear (PL) category.  To handle arbitrary topological embeddings 
of the knot, we appeal to the Moise--Bing triangulation theorem to convert the relevant topology to the PL category.  The referee 
has pointed out that by working in the 
PL category instead of the smooth category, we could avoid the use of Munkres' theorem \cite{munkres} on approximating PL 
homeomorphisms with diffeomorphisms.

\subsection{Embedded surfaces and families of spheres}\label{spheresurface}

\begin{definition}
A simple closed curve on a closed surface $\Sigma$ is called \emph{inessential} if and only if it bounds a disk in $\Sigma$; otherwise 
it is called \emph{essential}.
\end{definition}

\begin{definition}
If $S\subseteq\mathbb R^3$ is a disjoint union of embedded spheres, then we denote by $\operatorname{int}(S)$ (the ``interior'' of $S$) the collection of 
components of $\mathbb R^3\setminus S$ which are separated from infinity by an odd number of components of $S$.
\end{definition}

\begin{lemma}\label{spherefamily}
Suppose that $F\subseteq\mathbb R^3$ is a PL embedded closed surface of genus $g\geq 1$.  Let $\{S_t\}_{t\in[0,1]}$ be a PL one-parameter family 
of embedded surfaces, with the property that $S_t$ is a disjoint union of embedded spheres transverse to $F$, except for finitely many values of $t\in(0,1)$, 
when either $S_t$ or $S_t\cap F$ undergoes a single surgery.  Suppose that $S_t\cap F$ consists solely of curves inessential in $F$ whenever $S_t$ is transverse to $F$.  
If $F\cap\operatorname{int}(S_0)$ has a connected component of genus $g$, then so does $F\cap\operatorname{int}(S_1)$.
\end{lemma}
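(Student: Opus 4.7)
I would prove the lemma by tracking the unique genus-$g$ component of $F \setminus S_t$ as $t$ varies and showing that its side relative to $\operatorname{int}(S_t)$ is preserved. First I would establish that under the inessentiality hypothesis, $F \setminus S_t$ has exactly one connected component of genus $g$ (the \emph{main component} $M_t$), while all other components are planar. This follows by induction on the number of intersection curves: an innermost inessential curve in $F$ cuts off a disk and preserves the unique genus-$g$ component. Since $M_t$ is connected and disjoint from $S_t$, it lies in a single component of $\mathbb{R}^3 \setminus S_t$, so either $M_t \subset \operatorname{int}(S_t)$ or $M_t \cap \operatorname{int}(S_t) = \emptyset$; the condition ``$F \cap \operatorname{int}(S_t)$ has a genus-$g$ component'' is thus equivalent to $M_t \subset \operatorname{int}(S_t)$. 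The lemma reduces to showing that the indicator $\sigma(t) \in \{0,1\}$ of this event is constant in $t$.

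Between critical times $\sigma$ is constant by ambient isotopy. At each critical time $t_0$ I would check the three types of event in turn. A surgery of $S_t$ itself takes place at a point of $\mathbb{R}^3$ disjoint from $F$ (by genericity), so $\operatorname{int}(S_t) \cap F$ is topologically unchanged. A birth or death of an intersection circle adds or removes a small inessential disk in $X_t := F \cap \operatorname{int}(S_t)$ or in $Y_t := F \setminus \overline{X_t}$, and has no effect on the main component. A saddle tangency transfers a $1$-handle between $X_t$ and $Y_t$; this is where the real work lies.

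For the saddle case I would argue by contradiction, assuming $\sigma$ flips, so that $M^- := M_{t_0^-} \subset X_{t_0^-}$ has genus $g$ while $M^+ := M_{t_0^+} \subset Y_{t_0^+}$ also has genus $g$. Since a single $1$-handle attachment changes the genus of any surface component by at most one and $Y_{t_0^-}$ is planar, no component of $Y_{t_0^+}$ can have genus exceeding one; this immediately rules out a flip when $g \geq 2$. For $g = 1$ I would use the following global argument. Each boundary curve of $M^+$ is inessential in $F$ and so bounds a disk in $F$; capping off these circles yields a closed genus-$1$ subsurface of the torus $F$, which must therefore equal $F$. Hence $F \setminus M^+$ is a disjoint union of disks (the caps). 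For any point $p \in F$ outside a small neighborhood $U$ of the saddle point $p_0$, membership in $X_t$ versus $Y_t$ varies continuously in $t$ near $t_0$, so $(M^- \setminus U) \cap M^+ = \emptyset$, placing $M^- \setminus U$ inside the disjoint union of caps. But $U$ may be taken small enough that $M^- \setminus U$ still contains a connected piece of genus $1$, which cannot be embedded in any disk---a contradiction. The main obstacle is this saddle analysis, and particularly the $g = 1$ subcase, where a local count of genus change via one $1$-handle does not rule out a flip and the global capping argument is needed.
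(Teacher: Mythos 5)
Your proof is correct, but it takes a genuinely different route from the paper's. The paper does not track the genus-$g$ component through a case analysis of the surgeries; instead it considers the single numerical invariant
\[
\dim_{\mathbb Q}\operatorname{im}\bigl(H_1(F\cap\operatorname{int}(S_t);\mathbb Q)\to H_1(F;\mathbb Q)\bigr),
\]
observes that under the inessentiality hypothesis it equals $2g$ when the genus-$g$ component lies in $\operatorname{int}(S_t)$ and $0$ otherwise, and that each allowed surgery changes it by at most $1$; since $2g\geq 2$, the value can never jump between $0$ and $2g$, so it is constant. This is uniform in $g$ and disposes of the lemma in a few lines. Your argument replaces the homological rank by the genus itself, which likewise changes by at most $1$ per saddle; but the gap is then $g$ rather than $2g$, so the jump argument fails precisely when $g=1$, forcing you into the separate global capping argument on the torus. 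That argument is sound --- each component of $F\setminus M^+$ is trapped in a disk because its adjacent boundary curve bounds a disk on the side away from $M^+$, and a genus-one core of $M^-\setminus U$ cannot embed in a disk --- but it is really a hands-on proof of the homological fact the paper gets for free, namely that a genus-one subsurface of the torus with inessential boundary carries all of $H_1(F;\mathbb Q)\cong\mathbb Q^2$, so two disjoint such subsurfaces cannot coexist. What your approach buys is a more concrete picture, including the explicit check that surgeries of $S_t$ itself occur in balls disjoint from $F$ and hence do not move $F\cap\operatorname{int}(S_t)$ (a point the paper leaves implicit in ``doing any of the allowed surgeries can change this value by at most $1$''); what the paper's choice of invariant buys is the elimination of the $g=1$ special case and of the case analysis altogether.
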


\begin{proof}
At any time $t$ when $S_t$ is a disjoint union of spheres transverse to $F$, we know that $F\cap S_t$ consists solely of inessential curves, 
and thus $F\setminus S_t$ has a connected component of genus $g$.  Furthermore, for such values of $t$, we have:
\begin{equation}
\dim_{\mathbb Q}\operatorname{im}(H_1(F\cap\operatorname{int}(S_t),\mathbb Q)\to H_1(F,\mathbb Q))=\begin{cases}2g&g(F\cap\operatorname{int}(S_t))=g\cr 0&\text{otherwise}\end{cases}
\end{equation}
Doing any of the allowed surgeries can change this value by at most $1$, so since it always equals either $2g$ or $0$, we conclude that 
it must be constant.  Thus the lemma follows.
\end{proof}

\subsection{An obstruction to essential intersections}\label{essobstr}

\begin{definition}
For a PL embedded closed surface $F\subseteq\mathbb R^3$, let $\mathcal S$ denote the set of nontrivial isotopy 
classes of simple loops on $F$, and let $i:\mathcal S\times\mathcal S\to\mathbb Z_{\geq 0}$ denote the minimum 
geometric intersection number.  For $\beta\in\mathcal S$, we define:
\begin{equation}
I(F,\beta):=\min_{\alpha\in U}i(\alpha,\beta)
\end{equation}
where $U$ is the set of all $\alpha\in\mathcal S$ which bound a PL embedded disk whose interior is disjoint from $F$.
\end{definition}

\begin{lemma}\label{mustbeinessential}
Suppose that $F\subseteq\mathbb R^3$ is a PL embedded closed surface of genus $g\geq 1$ which contains a PL simple closed curve $\beta$.  
Let $S$ be a PL embedded sphere which is transverse to $F$ and $\beta$, and which satisfies $\left|S\cap\beta\right|<I(F,\beta)$.  Then 
every curve in $S\cap F$ is inessential in $F$.
\end{lemma}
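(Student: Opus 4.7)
The plan is to argue by contradiction. Assume some curve in $S \cap F$ is essential in $F$; the goal is to produce $\alpha \in U$ satisfying $|\alpha \cap \beta| \leq |S \cap \beta|$. This will yield $I(F,\beta) \leq i([\alpha],[\beta]) \leq |\alpha \cap \beta| \leq |S \cap \beta| < I(F,\beta)$, the desired contradiction. Here the inequality $|\alpha \cap \beta| \geq i([\alpha],[\beta])$ holds for any transverse representative of a class in $\mathcal{S}$, and the transversality of $\alpha$ with $\beta$ inside $F$ follows from a dimension count at each intersection point, using the given transversalities of $S$ with $F$ and of $S$ with $\beta$.

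To find the candidate $\alpha$, I first apply an innermost-disk argument on the sphere $S$: among the essential-in-$F$ curves of $S \cap F$, I choose one, call it $\gamma$, such that one of the two disks $D \subset S$ bounded by $\gamma$ contains no other essential-in-$F$ curve of $S \cap F$ in its interior. Then $\gamma$ lies in $\mathcal{S}$ (being essential in $F$), and $|\gamma \cap \beta| \leq |S \cap \beta|$ is obvious since $\gamma \subset S$. Thus $\alpha = \gamma$ will work provided I can show $\gamma \in U$, i.e.\ that $\gamma$ bounds a PL embedded disk in $\mathbb{R}^3$ whose interior is disjoint from $F$.

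I will produce such a disk by iterated surgery on $D$, eliminating its interior intersection curves with $F$ one at a time. At each stage, these curves are all inessential in $F$ by the innermost choice of $\gamma$. I pick one, $c$, whose bounding disk $D_2 \subset F$ is innermost in $F$ among such disks (so that $\operatorname{int}(D_2)$ meets no other curve of $D \cap F$). Inside a bicollar $F \times [-\epsilon,\epsilon]$ of $F$, I excise from $D$ a slightly enlarged neighborhood $\tilde{E} \supset E$ of the subdisk $E \subset D$ bounded by $c$ (enlarged so that $\partial \tilde{E}$ lies strictly off $F$), and glue in its place a small parallel push-off of $D_2$ attached via a thin annulus in the bicollar. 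The innermost choice of $D_2$ ensures the push-off can be taken disjoint from the rest of $D$, so the resulting disk is again PL embedded, has the same boundary $\gamma$, and has strictly fewer intersection curves with $F$ (all still inessential). After finitely many iterations I arrive at an embedded disk realizing $\gamma \in U$.

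The hard part I anticipate is the surgery step, where two choices need to be coordinated. First, $c$ must be chosen so that $D_2$ is innermost on $F$ (rather than on $D$), for otherwise the push-off of $D_2$ would have to pass through the other intersection curves of $D$ with $F$ sitting inside $D_2$ and produce non-embedded output. Second, the excised region $\tilde{E}$ must strictly contain the subdisk $E$ so that $c$ genuinely disappears from the new disk's intersection set rather than remaining on its boundary. Once these choices are set up carefully, the remainder of the argument is purely combinatorial bookkeeping.
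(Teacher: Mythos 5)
Your proposal is correct and follows essentially the same route as the paper's proof: take an innermost (on $S$) essential curve of $S\cap F$, clean up the subdisk of $S$ it bounds by surgering along disks that are innermost in $F$, conclude the curve lies in $U$, and contradict the definition of $I(F,\beta)$. You supply more detail than the paper at the two delicate points (the embeddedness of the surgered disk and the transversality of $\gamma$ with $\beta$ needed to compare $|\gamma\cap\beta|$ with $i(\gamma,\beta)$), both of which the paper treats as routine.
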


\begin{proof}
Suppose for sake of contradiction that $S\cap F$ contains an essential curve.  Consider the set of all such essential curves as a 
collection of curves on of $S$, 
and let $\alpha$ be any such curve that is innermost (i.e.\ bounds a disk in $S$ which contains no other essential curves).  
Then by definition $\alpha$ bounds a disk $D$ whose intersection with $F$ consists solely of inessential curves.  Considering such 
an inessential curve which is innermost in $F$, it is easy to see that we can modify $D$ so as to eliminate this intersection, while keeping $D$ embedded.  
Proceeding in this way, we may eliminate all the intersections of $D$ with $F$, and thus conclude that $\alpha$ bounds a 
PL embedded disk whose interior is disjoint from $F$.  Then by definition of $I(F,\beta)$, we have $\left|\alpha\cap\beta\right|\geq I(F,\beta)$, which 
contradicts the assumption $\left|S\cap\beta\right|<I(F,\beta)$.  Thus we are done.
\end{proof}

\subsection{Double bubbles with few intersections with $\beta$}\label{dbsec}

\begin{definition}
A \emph{double bubble} is a $2$-complex $(S,D)$ consisting of a sphere $S$ and a disk $D$ glued along its boundary to a simple loop in $S$.  
The curve $\partial D\subseteq S$ thus divides $S$ into two disks, and gluing either one of these disks to $D$ along their common boundary, 
we get two spheres, which we refer to as the two halves of the double bubble.

When we say that a double bubble $(S,D)$ is embedded in $\mathbb R^3$, it is required that $D$ is contained in $\operatorname{int}(S)$.
\end{definition}

\begin{lemma}\label{doublebubble}
Suppose that $F\subseteq\mathbb R^3$ is a PL embedded closed surface of genus $g\geq 1$ which contains a PL simple closed curve $\beta$.  
Let $(S,D)$ be a double bubble PL embedded in $\mathbb R^3$, and let $H_1$ and $H_2$ denote the two halves of the double bubble.  Suppose 
that $(S,D)$ is transverse to $F$ and $\beta$, and that:
\begin{equation}
\left|S\cap\beta\right|+2\left|D\cap\beta\right|<I(F,\beta)
\end{equation}
If $F\cap\operatorname{int}(S)$ has a connected component of genus $g$, then so does $F\cap\operatorname{int}(H_i)$ for some $i\in\{1,2\}$.
\end{lemma}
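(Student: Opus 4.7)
The plan is to prove the lemma by constructing a PL one-parameter family $\{S_t\}_{t\in[0,1]}$ that interpolates, via a single $2$-surgery, from $S_0 = S$ to $S_1 = H_1' \sqcup H_2'$, where each $H_i'$ is a small inward push-off of $H_i$ into $R_i := \operatorname{int}(H_i)$ (so that $H_1'$ and $H_2'$ are disjoint embedded spheres). Lemmas \ref{spherefamily} and \ref{mustbeinessential} will then transport the genus-$g$ component of $F\cap\operatorname{int}(S)$ to $F\cap\operatorname{int}(H_i')$ for some $i\in\{1,2\}$, and I would upgrade this to $F\cap\operatorname{int}(H_i)$ using that $F$ has total genus $g$.

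For the construction, I would take a thin PL regular neighborhood $N$ of $D$ in $\mathbb{R}^3$, in generic position with respect to $F$ and $\beta$, chosen small enough that $N\cap S$ is a thin annular collar of $\partial D$ in $S$ and $N\cap\beta$ consists of $|D\cap\beta|$ short arcs each crossing $D$ once transversely. Let $D^+$ and $D^-$ denote the two disk components of $\partial N\cap\operatorname{int}(S)$, parallel push-offs of $D$ into $R_1$ and $R_2$ respectively, and let $S_i$ denote the closure of $\partial R_i\cap S$. Set $H_i' := (S_i\setminus N)\cup D^{\pm}$; each $H_i'$ is an embedded sphere bounding $R_i' := R_i\setminus N\subset R_i$, and $H_1'$ and $H_2'$ are disjoint. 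The family is: $S_t := S$ for $t\in[0,1/2)$; a single $2$-surgery at $t = 1/2$ replacing the annular collar $N\cap S$ by $D^+\sqcup D^-$; and $S_t := H_1'\sqcup H_2'$ for $t\in(1/2,1]$.

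To control the intersection count, transversality of $\beta$ to $S$ gives $\beta\cap\partial D = \emptyset$, so by taking $N$ thin enough I can ensure $N\cap S\cap\beta = \emptyset$. Hence $|S_t\cap\beta| = |S\cap\beta|$ for $t < 1/2$, while for $t > 1/2$ each transverse point $p\in D\cap\beta$ contributes exactly one crossing with $D^+$ and one with $D^-$ near $p$, giving $|S_t\cap\beta| = |S\cap\beta| + 2|D\cap\beta|$, which is strictly less than $I(F,\beta)$ by hypothesis. Applying Lemma \ref{mustbeinessential} to each component sphere of $S_t$ individually then gives that $S_t\cap F$ consists of inessential curves whenever $S_t$ is transverse to $F$. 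Lemma \ref{spherefamily} then produces a genus-$g$ connected component of $F\cap\operatorname{int}(S_1) = F\cap(R_1'\sqcup R_2')$, hence of $F\cap R_i'$ for some $i\in\{1,2\}$. Since $R_i'\subset R_i = \operatorname{int}(H_i)$, and since any connected subsurface of the closed genus-$g$ surface $F$ that attains genus $g$ forces its complement in $F$ to be planar, the connected component of $F\cap\operatorname{int}(H_i)$ containing this genus-$g$ piece must itself have genus $g$.

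The main obstacle is the setup of the family. Arranging $N$ in simultaneous generic position with respect to both $F$ and $\beta$ while ensuring $N\cap S$ is a thin annular collar of $\partial D$ needs a careful PL general position argument, and one must confirm that the transition at $t = 1/2$ genuinely counts as a single surgery in the sense of Lemma \ref{spherefamily}: it is the prototypical $2$-surgery of a $2$-manifold along an embedded disk, but this should be stated cleanly so that the limits of $S_t$ from both sides agree outside a small region around $\partial D$ and differ there only by the standard exchange of an annulus for two disks.
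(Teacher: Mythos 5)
Your overall strategy is the paper's: realize the compression of $S$ along $D$ as a one-parameter family of sphere systems, bound $|S_t\cap\beta|$ by $|S\cap\beta|+2|D\cap\beta|<I(F,\beta)$, feed this into Lemma \ref{mustbeinessential} and then Lemma \ref{spherefamily}, and finish by noting that the two resulting spheres sit inside $H_1$ and $H_2$. The final step (a genus-$g$ component of $F\cap R_i'$ forces a genus-$g$ component of $F\cap\operatorname{int}(H_i)$) is also fine.

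There is, however, a genuine gap at the point you flagged, and you have misdiagnosed what the problem is. Your family is constant except for an instantaneous swap at $t=1/2$ of the annulus $N\cap S$ for the two disks $D^+\sqcup D^-$. The worry is not whether this "counts as a single surgery" of $S_t$ --- it does --- but that the disk $D$ may meet $F$ in arbitrarily many circles, so at $t=1/2$ the surface $F\cap\operatorname{int}(S_t)$ is cut along all $|F\cap D|$ of those circles simultaneously (you delete $F\cap N$, a union of annuli). The proof of Lemma \ref{spherefamily} rests on the claim that each allowed elementary event changes $\dim_{\mathbb Q}\operatorname{im}(H_1(F\cap\operatorname{int}(S_t),\mathbb Q)\to H_1(F,\mathbb Q))$ by at most $1$; a compression of $S_t$ along a disk meeting $F$ in many curves is not such an event, so Lemma \ref{spherefamily} does not apply to your family as built. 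The paper avoids this entirely: it performs the $2$-surgery \emph{gradually}, sweeping the collar annulus across a neighborhood of $D$ and putting the whole family in general position with respect to $F$, so that the only exceptional times are finitely many instants at which $S_t\cap F$ undergoes a single (elementary) surgery, and the eventual neck-pinch of $S_t$ itself occurs along a tiny disk disjoint from $F$. Your version can be repaired either by adopting that gradual sweep, or by supplying the missing homological argument for the jump (all circles of $F\cap D^{\pm}$ and $F\cap S$ are inessential in $F$ by Lemma \ref{mustbeinessential} applied to $S$ and to $H_i'$, and cutting along curves that are null-homotopic in $F$ does not change the image of $H_1$ in $H_1(F,\mathbb Q)$); but as written, the appeal to Lemma \ref{spherefamily} is not justified.
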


\begin{proof}
Let $\{S_t\}_{t\in[0,1]}$ be a PL one-parameter family of embedded spheres, defined by starting with $S_0=S$ and performing a 
$2$-surgery along $D$ as $t$ varies from $0$ to $1$.  Putting this family in general position, we see that $S_t$ 
is transverse to $F$ except for a finite number of values of $t\in(0,1)$ at which $S_t\cap F$ undergoes a 
single surgery.  By construction, we have:
\begin{align}
\left|S_t\cap\beta\right|&\leq\left|S\cap\beta\right|+2\left|D\cap\beta\right|<I(F,\beta)
\end{align}
Thus by Lemma \ref{mustbeinessential}, every curve in $S_t\cap F$ is inessential in $F$ (for 
those times $t$ when $S_t$ is transverse to $F$).  Now the hypotheses of Lemma \ref{spherefamily} are satisfied, and by assumption, $F\cap\operatorname{int}(S_0)$ has 
a connected component of genus $g$.  Thus we conclude that $F\cap\operatorname{int}(S_1)$ 
has a connected component of genus $g$.  This component is contained in the interior of one of the two spheres that 
comprise $S_1$, and these two spheres are in turn contained in $H_1$ and $H_2$ respectively.  Thus $F\cap\operatorname{int}(H_i)$ 
has a connected component of genus $g$ for some $i\in\{1,2\}$.
\end{proof}

For technical reasons in the proof of Theorem \ref{surfacethm}, we shall not be able to ensure that the relevant double bubble is PL at 
the points where it intersects $\beta$.  We thus need to prove a version of Lemma \ref{doublebubble} which allows such 
double bubbles.  We shall need the following lemma, which shows how to straighten the double bubble without increasing 
its intersections with $\beta$.

\begin{lemma}\label{straighteninglemma}
Fix $p\in\mathbb R^3$.  Suppose that $F\subseteq\mathbb R^3$ is a PL embedded surface containing $p$, and suppose that 
$\beta$ is a simple PL curve on $F$ passing through $p$.  Let $U$ be an open embedded surface such that $U\cap\beta=\{p\}$ and
$U\setminus\{p\}$ is PL.  Then there exists a perturbation $U'$ of $U$, supported in an arbitrarily small 
neighborhood of $p$, such that $U'$ is PL embedded, intersects $\beta$ at most once, and is transverse to $F$ and $\beta$ in a neighborhood of its intersection with $\beta$.
\end{lemma}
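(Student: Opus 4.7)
The plan is to localize the surgery to a small ball around $p$ and replace the wild portion of $U$ by a PL spanning disk chosen to have minimal intersection with $\beta$.

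First, I choose a small PL ball $B$ centered at $p$, lying inside the prescribed neighborhood, in PL general position with $U\setminus\{p\}$, $F$, and $\beta$. Since $\beta$ is a simple PL curve through $p$, I can arrange that $\beta\cap B$ is a single PL arc $\alpha$ with endpoints $a,b\in\partial B$. Because $U$ is a topological $2$-manifold at $p$, shrinking $B$ further ensures that $U\cap\partial B$ is a single simple closed PL curve $\gamma$; and since $U\cap\beta=\{p\}$ lies in the interior of $B$, the curve $\gamma$ is disjoint from $\alpha$ and from $\{a,b\}$.

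Next, $\gamma$ separates $\partial B$ into two disks, and the pair $\{a,b\}$ either lies entirely in one of them (Case A) or is split between them (Case B). The linking number of $\gamma$ with $\alpha$ in the unknotted pair $(B,\alpha)$ is $0$ or $\pm 1$ respectively. In Case A, $\gamma$ is null-homotopic in $B\setminus\alpha$, so standard PL topology produces a PL disk $D\subset B$ with $\partial D=\gamma$ disjoint from $\alpha$. In Case B, I start with any PL spanning disk and eliminate canceling pairs of intersections with $\alpha$ by PL bigon moves (which are available because $\alpha$ is unknotted in the ball), arriving at a PL disk $D$ meeting $\alpha$ in exactly one point.

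Setting $U':=(U\setminus\operatorname{int}(B))\cup D$ then yields a PL embedded surface that agrees with $U$ outside $B$, intersects $\beta$ at most once, and --- after one further small PL perturbation of $D$ near its intersection with $\alpha$ --- is transverse to $F$ and $\beta$ in a neighborhood of that intersection. The main obstacle is the construction of $D$ in Case B: producing a PL spanning disk with exactly one transverse intersection with the PL arc $\alpha$. Topologically such a disk clearly exists; the real content is the routine PL general position arguments that guarantee these manipulations can be carried out while keeping $D$ PL and supported in $B$.
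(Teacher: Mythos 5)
Your overall strategy---localize near $p$ and swap the bad piece of $U$ for a PL spanning disk with minimal intersection with $\beta$---is the right one and is what the paper does, but your argument has a genuine gap at the very first step: the claim that, after shrinking $B$, the intersection $U\cap\partial B$ is a \emph{single} simple closed curve. Nothing justifies this. The hypothesis is only that $U$ is a topological surface that is PL away from $p$; the embedding may be wild at $p$, and moreover $U$ is an \emph{open} (noncompact) surface, so sheets of $U$ that are intrinsically far from $p$ may accumulate at $p$ in $\mathbb R^3$. For every small radius the transverse intersection $U\cap\partial B$ is a finite union of circles, but there need not exist any radius for which it is one circle, and the number of circles can be unbounded as $B$ shrinks. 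Your entire case analysis (linking number of ``the'' curve $\gamma$ with $\alpha$) and your final construction $U'=(U\setminus\operatorname{int}(B))\cup D$ presuppose a single boundary circle; with several circles, capping only one of them leaves $U'$ with boundary, and capping all of them requires controlling the intersections of several disjoint disks with $\alpha$ and knowing how the circles sit on $U$.

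The paper's proof is built precisely to handle this multiplicity. It takes $T$ to be a small tubular neighborhood of $\beta$ near $p$ (rather than a ball about $p$), so that $U\cap\partial T$ is a finite union of circles on the tube's boundary; it first removes all circles that are inessential in $\partial T$, leaving only meridians; and then it makes the key observation you are missing: each meridional circle that is \emph{innermost on $U$} bounds a disk in $U$ whose image must cross $\beta$, so since $|U\cap\beta|=1$ there is exactly one innermost circle, forcing all the meridional circles to be nested (concentric) on $U$. One then replaces everything inside the \emph{outermost} such circle by a meridian disk of the tube, which meets $\beta$ exactly once and transversally. If you want to repair your write-up, you must either prove a single-circle statement (which is false in general) or import an argument of this innermost/outermost type to organize arbitrarily many intersection circles. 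Your Case B construction of a disk meeting $\alpha$ exactly once is also only sketched (cancelling intersection pairs of a disk with an arc in a ball needs an argument, e.g.\ an innermost-circle argument on the boundary of a tube around $\alpha$), but that is a secondary issue compared with the single-circle assumption.
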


\begin{proof}
Let $T$ be a small tubular neighborhood of $\beta$ in a neighborhood of $p$, chosen so that $\partial T$ is transverse to $U$.  Any 
curve in $U\cap\partial T$ that is inessential and innermost in $\partial T$ may be eliminated by locally modifying $U$.  After all such modifications, $U\cap\partial T$ 
consists only of meridians of $\partial T$.  If $U\cap\partial T=\varnothing$, then we are done.  Otherwise, observe that any such circle which is innermost in $U$ 
must correspond to an intersection $U\cap\beta$.  Since there is only one such intersection, there is only one innermost circle.  Thus the circles $U\cap\beta$ are totally ordered by containment (they are isotopic to concentric circles).  Pick the outermost circle $\alpha$ on $U$.  This is a meridian of $\partial T$ and as such clearly bounds 
a PL disk which intersects $\beta$ once and is transverse to $F$ and $\beta$.  Gluing this disk into $U$ in place of whatever is inside $\alpha$, we are done.
\end{proof}

\begin{lemma}\label{doublebubble2}
Suppose that $F\subseteq\mathbb R^3$ is a PL embedded closed surface of genus $g\geq 1$ which contains a PL simple closed curve $\beta$.  
Let $(S,D)$ be a double bubble embedded in $\mathbb R^3$, and let $H_1$ and $H_2$ denote its two halves.  Assume that $(S,D)$ is PL away 
from its intersections with $\beta$ and that $\partial D\cap\beta=\varnothing$.  Suppose that:
\begin{equation}\label{fewint}
\left|S\cap\beta\right|+2\left|D\cap\beta\right|<I(F,\beta)
\end{equation}
If $F\cap\operatorname{int}(S)$ has a connected component of genus $g$, then so does $F\cap(B(\epsilon)+\operatorname{int}(H_i))$ for some $i\in\{1,2\}$ 
and every $\epsilon>0$ (where $B(\epsilon)+\operatorname{int}(H_i)$ denotes the $\epsilon$-neighborhood of $\operatorname{int}(H_i)$).
\end{lemma}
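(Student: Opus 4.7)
The plan is to reduce to Lemma \ref{doublebubble} by using Lemma \ref{straighteninglemma} to straighten $(S,D)$ in small neighborhoods of each intersection with $\beta$, then applying a final small PL perturbation to achieve transversality with $F$ everywhere. Given $\epsilon>0$, the goal is to produce a PL double bubble $(S',D')$, transverse to $F$ and $\beta$, which differs from $(S,D)$ only inside an $\epsilon$-neighborhood of $(S\cup D)\cap\beta$, has intersection counts with $\beta$ no greater than those of $(S,D)$, and for which $F\cap\operatorname{int}(S')$ still contains a connected component of genus $g$. Applying Lemma \ref{doublebubble} to $(S',D')$ then yields a half $H_i'$ with $F\cap\operatorname{int}(H_i')$ of genus $g$, and since $\operatorname{int}(H_i')\subseteq B(\epsilon)+\operatorname{int}(H_i)$, this gives the conclusion.

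To carry out the straightening, enumerate the finitely many points $p_1,\ldots,p_n$ of $(S\cup D)\cap\beta$; since $\partial D\cap\beta=\varnothing$, each $p_j$ lies in the smooth interior of exactly one of $S$ or $D$. For each $j$, choose a small open disk neighborhood $U_j$ of $p_j$ in whichever of $S,D$ contains it, with $U_j\cap\beta=\{p_j\}$, $\overline{U_j}$ disjoint from $\partial D$ and from the other $\overline{U_k}$, and $\overline{U_j}$ contained in a ball of radius less than $\epsilon/2$ about $p_j$ which is otherwise disjoint from $(S,D)$. Lemma \ref{straighteninglemma} replaces each $U_j$ by a PL surface $U_j'$, supported in this ball, which meets $\beta$ at most once and is transverse to $F$ and $\beta$ near that intersection. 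Substituting yields a double bubble that is PL everywhere (since $(S,D)$ was PL away from the $p_j$'s) and whose intersection counts with $\beta$ cannot exceed those of $(S,D)$, so (\ref{fewint}) is preserved. A further generic small PL perturbation, supported away from $\beta$ and from the already-transverse regions near the $p_j$'s, makes the resulting $(S',D')$ transverse to $F$ as well, leaving all the hypotheses of Lemma \ref{doublebubble} in force.

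The main obstacle is verifying that this final transversalization still leaves $F\cap\operatorname{int}(S')$ with a genus $g$ component, since a small PL motion of $S'$ can in principle alter the combinatorics of $F\setminus S'$. The way around this is to observe that the hypothesized genus $g$ component of $F\cap\operatorname{int}(S)$ contains a compact PL subsurface $\Sigma\subseteq F$ of genus $g$ (with boundary) lying strictly inside the open set $\operatorname{int}(S)$. One then requires each stage of the perturbation to be small enough (in the PL sup-norm) to keep $\Sigma$ in the interior of the current sphere, which forces $F\cap\operatorname{int}(S')\supseteq\Sigma$ and hence contains a component of genus at least $g$, therefore exactly $g$. All other steps amount to routine book-keeping with the straightening lemma and Lemma \ref{doublebubble}.
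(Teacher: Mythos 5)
Your proposal is correct and follows essentially the same route as the paper: straighten each intersection with $\beta$ via Lemma \ref{straighteninglemma}, perturb to general position, apply Lemma \ref{doublebubble}, and note the perturbations can be made arbitrarily small. Your compact-subsurface argument for why the genus-$g$ component survives the perturbation is a reasonable expansion of the step the paper dispatches by simply noting that this property of the open surface $F\cap\operatorname{int}(S)$ is stable under small perturbations of $S$.
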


\begin{proof}
First, apply Lemma \ref{straighteninglemma} to each intersection of $(S,D)$ with $\beta$ to construct an arbitrarily small perturbation 
$(S',D')$ which is PL, still satisfies (\ref{fewint}), and is transverse to $F$ and $\beta$ in a neighborhood of its intersections with $\beta$.  Second, 
put $(S',D')$ in general position to achieve transversality with $F$ everywhere, and call the resulting arbitrarily small perturbation $(S'',D'')$.  
At this point, $(S'',D'')$ satisfies the hypotheses of Lemma \ref{doublebubble}.

Since $F\cap\operatorname{int}(S)$ is an open surface, the property of it having a connected component of genus $g$ is preserved 
under sufficiently small perturbations of $S$.  Thus we may assume that $F\cap\operatorname{int}(S'')$ has a connected 
component of genus $g$.  Then applying Lemma \ref{doublebubble}, we conclude that $F\cap\operatorname{int}(H_i'')$ 
has a connected component of genus $g$ for some $i\in\{1,2\}$.  Our perturbations can be arbitrarily small, so the lemma follows.
\end{proof}

\subsection{Integral geometry and the proof of Theorem \ref{surfacethm}}\label{geompart}

\begin{proof}[Proof of Theorem \ref{surfacethm}]
By hypothesis, we are given a PL curve $\beta$ on a surface $F$ PL embedded in $\mathbb R^3$.  We will prove that 
if $\beta^\ast$ is isotopic to $\beta$, then $\delta(\beta^\ast)\geq\frac 1{160}I(F,\beta)$.

By assumption, there is a homeomorphism $\psi:\mathbb R^3\to\mathbb R^3$ which sends $\beta$ to $\beta^\ast$.  It is a fundamental 
fact of three-dimensional topology (a result of Moise \cite{moise} and later Bing \cite{bing}) that homeomorphisms 
of three-manifolds can be approximated by PL homeomorphisms (see also Hamilton \cite{hamilton} for a 
modern proof based on the methods of Kirby and Siebenmann \cite{kirsie}).  Specifically, we apply \cite[p62 Theorem 8]{bing} to modify 
$\psi:\mathbb R^3\setminus\beta\to\mathbb R^3\setminus\beta^\ast$ so that it is a PL homeomorphism.  We let $F^\ast$ denote the image of $F$ under 
$\psi$.  In symbols, we have a homeomorphism:
\begin{align}
\psi:\mathbb R^3&\to\mathbb R^3\cr
(F,\beta)&\mapsto(F^\ast,\beta^\ast)
\end{align}
and the restriction $\psi:\mathbb R^3\setminus\beta\to\mathbb R^3\setminus\beta^\ast$ is PL.  This modification 
of $\psi$ is necessary in order to satisfy the hypotheses of Lemma \ref{doublebubble2}, which we will eventually apply.

Let $\operatorname{Box}(r)$ denote the set $\{|x|<r,|y|<2^{1/3}r,|z|<2^{2/3}r\}$ in $\mathbb R^3$.  We will make use of the convenient 
fact that the plane $\{z=0\}$ divides $\operatorname{Box}(r)$ into two copies of $\operatorname{Box}(2^{-1/3}r)$ (this is, however, 
not crucial for our method of proof; scaled copies of any convex set $K$ could be used in place of $\operatorname{Box}$; we just might 
have to cut $K$ into more pieces to make every piece fit in a smaller copy of $K$).  Now we define 
the (clearly nonempty) set:
\begin{align}
\mathcal R:=\left\{r>0\;\middle|\;\parbox{3.5in}{there exists an open subset of $\mathbb R^3$ congruent to $\operatorname{Box}(r)$ 
whose intersection with $F^\ast$ has a connected component of genus $g$}\right\}
\end{align}
We will show that if $\delta(\beta^\ast)<\frac 1{160}I(F,\beta)$, then $(1-\delta)\mathcal R\subseteq\mathcal R$ for some $\delta>0$.  This contradicts 
the obvious fact that $\inf\mathcal R>0$, and thus finishes the proof.

Suppose $r_0\in\mathcal R$; our aim will be to show that $(1-\delta)r_0\in\mathcal R$ for some $\delta>0$ (independent of $r_0$).  We may assume without loss of generality 
that $\operatorname{Box}(r_0)\cap F^\ast$ has a connected component of genus $g$, and that $r_0=1$.  Fix some $\epsilon>0$.  We begin with the following integral geometric estimate:
\begin{equation}
\int_1^{1+\epsilon}\#(\beta^\ast\cap\partial\operatorname{Box}(r))\,dr\leq\operatorname{Length}(\beta^\ast\cap\operatorname{Box}(1+\epsilon))
\end{equation}
Observe now that we may bound the right hand side by $10(1+\epsilon)\delta(\beta^\ast)$ (pick any point $p\in\beta^\ast\cap\operatorname{Box}(1+\epsilon)$; then 
any other point $q\in\beta^\ast\cap\operatorname{Box}(1+\epsilon)$ satisfies $\left|p-q\right|\leq 2(1+\epsilon)\sqrt{1+2^{2/3}+2^{4/3}}<5(1+\epsilon)$, and thus is 
within distance $5(1+\epsilon)\delta(\beta^\ast)$ of $p$ along $\beta^\ast$).  Thus we have:
\begin{equation}
\int_1^{1+\epsilon}\#(\beta^\ast\cap\partial\operatorname{Box}(r))\,dr\leq 10(1+\epsilon)\delta(\beta^\ast)
\end{equation}
Hence there exists $r_1\in(1,1+\epsilon)$ such that:
\begin{equation}
\#(\beta^\ast\cap\partial\operatorname{Box}(r_1))\leq 10(1+\epsilon^{-1})\delta(\beta^\ast)
\end{equation}
Similarly, let us intersect $\beta^\ast$ with the planes $\{z=s\}$ (dividing the long dimension of $\operatorname{Box}(r_1)$ roughly in half), and write:
\begin{equation}
\int_{-\epsilon}^\epsilon\#(\beta^\ast\cap\operatorname{Box}(r_1)\cap\{z=s\})\,ds\leq\operatorname{Length}(\beta^\ast\cap\operatorname{Box}(r_1))\leq 10(1+\epsilon)\delta(\beta^\ast)
\end{equation}
As above, we find $s_1\in(-\epsilon,\epsilon)$ such that:
\begin{equation}
\#(\beta^\ast\cap\operatorname{Box}(r_1)\cap\{z=s_1\})\leq 5(1+\epsilon^{-1})\delta(\beta^\ast)
\end{equation}
For technical conveinence (to satisfy the hypotheses of Lemma \ref{doublebubble2}), we shall also assume that $\{z=s_1\}$ is 
disjoint from $\beta^\ast\cap\partial\operatorname{Box}(r_1)$ (at worst this disqualifies a finite number of values of $s$).

Now define the double bubble $(S^\ast,D^\ast):=(\partial\operatorname{Box}(r_1),\{z=s_1\}\cap\operatorname{Box}(r_1))$, and denote by 
$H_1^\ast$ and $H_2^\ast$ the two half-boxes into which $D^\ast$ divides $S^\ast$.  By construction, we have:
\begin{align}
\left|S^\ast\cap\beta^\ast\right|+2\left|D^\ast\cap\beta^\ast\right|
&=\#(\beta^\ast\cap\partial\operatorname{Box}(r_1))+2\cdot\#(\beta^\ast\cap\{z=s_1\}\cap\operatorname{Box}(r_1))\cr
&\leq 20(1+\epsilon^{-1})\delta(\beta^\ast)
\end{align}
Now if $20(1+\epsilon^{-1})\delta(\beta^\ast)<I(F,\beta)$, then we may apply Lemma \ref{doublebubble2} to $(\varphi^{-1}(S^\ast),\varphi^{-1}(D^\ast))$ 
and conclude that for some $i\in\{1,2\}$ and all $\eta>0$, the surface $F\cap(B(\eta)+\operatorname{int}(\varphi^{-1}(H_i^\ast)))$ has a connected component 
of genus $g$.  Of course, this implies that $F^\ast\cap(B(\eta)+\operatorname{int}(H_i^\ast))$ has a connected component of genus $g$ for some $i\in\{1,2\}$ 
and all $\eta>0$.  By construction, each $H_i^\ast$ is \emph{strictly} contained in a congruent image of $\operatorname{Box}(2^{-1/3}(1+\epsilon)+\frac\epsilon 2)$, 
so we have shown that:
\begin{equation}
20(1+\epsilon^{-1})\delta(\beta^\ast)<I(F,\beta)\implies\left(2^{-1/3}(1+\epsilon)+\frac\epsilon 2\right)\mathcal R\subseteq\mathcal R
\end{equation}
One easily calculates that for $\epsilon=\frac 17$, we have $2^{-1/3}(1+\epsilon)+\frac\epsilon 2<1$, and 
so the right hand side would contradict the fact that $\inf\mathcal R>0$.  Thus we have $160\cdot\delta(\beta^\ast)\geq I(F,\beta)$, as was to be shown.
\end{proof}

\bibliographystyle{plain}
\bibliography{torusdist}

\end{document}